\theoremstyle{plain}
\newtheorem{theorem}{Theorem}
\newtheorem*{theorem*}{Theorem}
\numberwithin{equation}{section}
\newcommand{\R}{\mathbb{R}}
\newcommand{\sn}{{\rm sn}}
\newcommand{\ii}{{\rm i}}
\newcommand{\ddd}{{\rm dn}_3}
\begin{document}

\title {The Berndt-Bhargava-Garvan Transfer Principle for signature three}

\date{}

\author[P.L. Robinson]{P.L. Robinson}

\address{Department of Mathematics \\ University of Florida \\ Gainesville FL 32611  USA }

\email[]{paulr@ufl.edu}

\subjclass{} \keywords{}

\begin{abstract}
We offer a new proof for the Berndt-Bhargava-Garvan Transfer Principle that connects the signature-three elliptic theory of Ramanujan to the classical elliptic theory. 
\end{abstract}

\maketitle

\section*{Introduction}

\medbreak 

In [1995] Berndt, Bhargava and Garvan introduce a Transfer Principle by means of which to pass between `classical' elliptic function theory and elliptic function theory in signature three. Their transfer principle is a direct consequence of then-new transformation formulae that relate the hypergeometric functions $F_2 = F(\tfrac{1}{2}, \tfrac{1}{2}; 1; \bullet)$ and $F_3 = F(\tfrac{1}{3}, \tfrac{2}{3}; 1; \bullet)$: explicitly, if with $0 < p < 1$ we define 
$$\alpha = \frac{p^3 (2 + p)}{1 + 2p} \; \; \; {\rm and} \; \; \; \beta = \frac{27}{4} \frac{p^2 (1+p)^2}{(1 + p + p^2)^3}$$
then 
$$(1 + p + p^2) \, F_2 (\alpha) = \sqrt{1 + 2p} \, F_3 (\beta)$$
and 
$$(1 + p + p^2) \, F_2 (1 - \alpha) = \sqrt{3 + 6p} \, F_3 (1 - \beta)$$
whence 
$$\frac{ F_2 (1 - \alpha)}{F_2 (\alpha)} = \sqrt3 \, \frac{F_3 (1 - \beta)}{F_3 (\beta)}.$$ In [1995] these hypergeometric identities appear as Theorem 5.6, Corollary 5.7 and Corollary 5.8 respectively; the transfer principle itself appears as Theorem 5.9. The proofs in [1995] rest on calculations involving $q$-series, especially the cubic theta-functions $a(q), b(q)$ and $c(q)$ of the brothers Borwein. 

\medbreak 

Here, we present completely different proofs of these hypergeometric identities. Our proofs derive from the elliptic function $\ddd$ of Shen [2004]; they therefore serve to reinforce the place of $\ddd$ within the theory of elliptic functions in signature three. In Section 1 we review the construction of the elliptic function $\ddd$ and its coperiodic Weierstrass function $p$; further, we express the fundamental periods of the elliptic functions $\ddd$ and $p$ explicitly in terms of the `signature-three' hypergeometric function $F(\tfrac{1}{3}, \tfrac{2}{3}; 1; \bullet)$. In Section 2 we review the classical Jacobian perspective on the Weierstrass function $p$; this facilitates explicit expressions for the fundamental periods of $\ddd$ in terms of the `classical' hypergeometric function $F(\tfrac{1}{2}, \tfrac{1}{2}; 1; \bullet)$. In Section 3 we assemble the pieces to deduce the aforementioned hypergeometric identities.

\medbreak 

\section{The elliptic function $\ddd$}

\medbreak 

The function $\ddd$ is initially defined as a strictly-increasing function from $\R$ onto $\R$; this function is then seen to satisfy a differential equation whose solutions are known to be elliptic. Here, we outline the construction; for details, see [2004]. 

\medbreak 

We begin by fixing $0 < \kappa < 1$ as modulus and $\lambda = \sqrt{1 - \kappa^2}$ as complementary modulus. We define $\delta_{\kappa} : \R \to \R$ to be the derivative of the function that is inverse to 
$$T \mapsto \int_0^T F(\tfrac{1}{3}, \tfrac{2}{3} ; \tfrac{1}{2} ; \kappa^2 \sin^2 t) \, {\rm d}t.$$ 
The function $\delta_{\kappa}$ (easily) satisfies the initial condition $\delta_{\kappa} (0) = 1$ and (less easily) satisfies the differential equation 
$$9 (\delta_{\kappa}')^2 = 4 (1 - \delta_{\kappa}) (\delta_{\kappa}^3 + 3 \delta_{\kappa}^2 - 4 \lambda^2).$$ 
The solution of this initial value problem is readily identified, as follows. 

\medbreak 

\begin{theorem} \label{dn3}
The function $\delta_{\kappa}$ satisfies  
$$(1 - \delta_{\kappa}) (\tfrac{1}{3} + p_{\kappa}) = \tfrac{4}{9} \kappa^2$$ 
where $p_{\kappa} = \wp(\bullet; g_2, g_3)$ is the Weierstrass function with invariants 
$$g_2 = \tfrac{4}{27} (9 - 8 \kappa^2) = \tfrac{4}{27} (8 \lambda^2 + 1)$$ 
and 
$$g_3 = \tfrac{8}{729} (27 - 36 \kappa^2 + 8 \kappa^4) = \tfrac{8}{729} (8 \lambda^4 + 20 \lambda^2 - 1).$$ 
\end{theorem}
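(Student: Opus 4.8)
The plan is to convert the asserted algebraic relation into a \emph{definition} of an auxiliary function and then verify that this function solves the Weierstrass differential equation with the prescribed invariants. Accordingly, on the real line (away from the zeros of $1 - \delta_{\kappa}$) I would set
$$P := \frac{4\kappa^2}{9(1 - \delta_{\kappa})} - \frac{1}{3},$$
so that $(1 - \delta_{\kappa})(\tfrac13 + P) = \tfrac49 \kappa^2$ holds by construction. Since $\delta_{\kappa}$ is elliptic, $P$ extends to a meromorphic function on $\C$, and the theorem reduces to the single assertion $P = \wp(\bullet; g_2, g_3)$; the claimed identity is then exactly this equality.

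First I would compute $(P')^2$. Differentiating gives $P' = \tfrac{4}{9}\kappa^2\, \delta_{\kappa}'/(1 - \delta_{\kappa})^2$, whence
$$(P')^2 = \frac{16\kappa^4}{81} \, \frac{(\delta_{\kappa}')^2}{(1 - \delta_{\kappa})^4}.$$
Into this I would substitute the differential equation $9(\delta_{\kappa}')^2 = 4(1 - \delta_{\kappa})(\delta_{\kappa}^3 + 3\delta_{\kappa}^2 - 4\lambda^2)$, which clears $(\delta_{\kappa}')^2$ and one factor of $1 - \delta_{\kappa}$, leaving $(P')^2$ as a rational expression in $\delta_{\kappa}$ with denominator $(1 - \delta_{\kappa})^3$. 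I would then eliminate $\delta_{\kappa}$ in favour of $P$ through $1 - \delta_{\kappa} = \tfrac{4}{3}\kappa^2/(3P + 1)$ and simplify, using $\lambda^2 = 1 - \kappa^2$. Collecting powers of $P$ yields precisely
$$(P')^2 = 4P^3 - g_2 P - g_3$$
with $g_2 = \tfrac{4}{27}(9 - 8\kappa^2)$ and $g_3 = \tfrac{8}{729}(27 - 36\kappa^2 + 8\kappa^4)$. This computation, though entirely mechanical, is the bulk of the labour, and the one genuine check is that the linear and constant coefficients reassemble exactly into $-g_2 P$ and $-g_3$.

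To conclude, I would pin down the singular behaviour of $P$ at the origin. Since $\delta_{\kappa}(0) = 1$, the differential equation forces $\delta_{\kappa}'(0) = 0$; balancing lowest-order terms then gives $1 - \delta_{\kappa}(x) \sim \tfrac49 \kappa^2 x^2$ as $x \to 0$, so that $P(x) = x^{-2} + O(1)$ — a double pole at the origin with leading coefficient $1$, exactly as for $\wp$. Moreover $\delta_{\kappa}$, being the derivative of an odd function, is even, hence so is $P$. Thus $P$ and $\wp(\bullet; g_2, g_3)$ are two even solutions of one and the same Weierstrass equation, sharing a double pole at the origin with identical leading coefficient; since the full Laurent expansion of such a solution is determined recursively by $g_2$ and $g_3$, the two functions agree near $0$, and the identity theorem propagates the equality to all of $\C$. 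This gives $P = \wp(\bullet; g_2, g_3)$, which is the desired relation.

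The main obstacle I anticipate lies in this final identification rather than in the algebra. One must be certain that matching the order and leading coefficient of the pole really forces $P = \wp(\bullet; g_2, g_3)$, and not merely a translate $\wp(\bullet - c)$ or a reflection: the Weierstrass equation admits all of these as solutions. It is precisely for this reason that the two facts $\delta_{\kappa}'(0) = 0$ and the evenness of $\delta_{\kappa}$ must both be recorded, since together they locate the pole at the origin and fix the parity, thereby removing the ambiguity.
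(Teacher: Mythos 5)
Your proposal is correct in substance, but it is worth noting that the paper does not actually prove this theorem at all: its ``proof'' is the single citation ``See [2004]'', deferring entirely to Shen's construction. What you supply is therefore a genuine, self-contained verification rather than a variant of the paper's argument. Your route --- define $P$ by the asserted relation, push the quartic differential equation for $\delta_{\kappa}$ through the substitution $1-\delta_{\kappa} = \tfrac{4}{3}\kappa^2/(3P+1)$, and check that the cubic reassembles with the stated $g_2$ and $g_3$ --- does work; I carried out the expansion and the quadratic term cancels while the linear and constant terms give exactly $-\tfrac{4}{27}(9-8\kappa^2)$ and $-\tfrac{8}{729}(27-36\kappa^2+8\kappa^4)$. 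Your handling of the final identification is also the right instinct: satisfying the Weierstrass equation alone only pins $P$ down to a translate $\wp(\bullet - c)$, and you correctly remove that ambiguity by locating a double pole with principal part $z^{-2}$ at the origin (via $1-\delta_{\kappa}\sim\tfrac{4}{9}\kappa^2 x^2$) and invoking evenness, after which the Laurent coefficients are forced recursively. The one presentational wrinkle is your opening appeal to ``$\delta_{\kappa}$ is elliptic'' to get meromorphic extension of $P$: in the paper's narrative the elliptic extension of $\delta_{\kappa}$ is a \emph{consequence} of this theorem, so you should either justify ellipticity directly from the quartic ODE (as the paper's introduction hints one can) or run the comparison with $\wp$ purely on the real line near $0$ and let analytic continuation do the rest. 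What your approach buys is independence from Shen [2004]; what it costs is the page of mechanical algebra that the citation conceals.
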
 

\begin{proof} 
See [2004]. 
\end{proof} 

\medbreak 

The elliptic function $\ddd$ of Shen is the elliptic extension of $\delta_{\kappa}$ whose existence is guaranteed by this Theorem: thus, 
$$\ddd = 1 - \frac{\tfrac{4}{9} \kappa^2}{\tfrac{1}{3} + p_{\kappa}}\, .$$

\medbreak 

The elliptic function $\ddd$ and the Weierstrass function $p_{\kappa}$ are plainly coperiodic. We write $(2 \omega_{\kappa}, 2 \omega_{\kappa} ')$ for their fundamental pair of periods with $\omega_{\kappa}$ and $- \ii \omega_{\kappa} '$ strictly positive. 

\medbreak 

A virtue of this construction is that it provides immediate access to the real half-period $\omega_{\kappa}$ in explicit hypergeometric terms. 

\medbreak 

\begin{theorem} \label{omega}
The real half-period $\omega_{\kappa}$ of $\ddd$ and $p_{\kappa}$ is given by 
$$\omega_{\kappa} = \tfrac{1}{2} \pi  F(\tfrac{1}{3}, \tfrac{2}{3} ; 1 ; \kappa^2).$$
\end{theorem}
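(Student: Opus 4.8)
The plan is to read the half-period straight off the construction of $\delta_\kappa$, bypassing the differential equation entirely, and then to evaluate a single integral by a term-by-term expansion. Write $\Phi(T) = \int_0^T F(\tfrac13,\tfrac23;\tfrac12;\kappa^2\sin^2 t)\,\mathrm{d}t$, so that $\delta_\kappa = (\Phi^{-1})'$. Since the integrand has period $\pi$ in $t$, I first record the quasi-periodicity $\Phi(T+\pi) = \Phi(T) + P$ with $P := \Phi(\pi)$. Inverting gives $\Phi^{-1}(s+P) = \Phi^{-1}(s) + \pi$, and differentiating this shows at once that $\delta_\kappa$ is periodic with period $P$.

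The crux is then to identify $P$ as the fundamental real period $2\omega_\kappa$. Because the hypergeometric series has non-negative coefficients, the integrand satisfies $F(\tfrac13,\tfrac23;\tfrac12;\kappa^2\sin^2 t) \geq 1$ with equality precisely when $\sin t = 0$; hence $\delta_\kappa = 1/\bigl(\Phi' \circ \Phi^{-1}\bigr)$ attains its maximal value $1$ exactly at the integer multiples of $P$, and so exactly once per period. This rules out $P$ being a proper multiple of the fundamental period: if the fundamental period $Q$ satisfied $P = mQ$ with $m \geq 2$, then $\delta_\kappa$ would take the value $1$ at each of $0, Q, \dots, (m-1)Q$ inside $[0,P)$. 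Thus $P$ is the fundamental real period. By the convention fixed above, the fundamental periods are the real $2\omega_\kappa$ and the imaginary $2\omega_\kappa'$, so the real period is $2\omega_\kappa$; hence $2\omega_\kappa = P$, that is, $\omega_\kappa = \tfrac12 P$.

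It remains to evaluate $\tfrac12 P$. The symmetry $\sin^2(\pi - t) = \sin^2 t$ folds the integral, giving $\omega_\kappa = \tfrac12 P = \int_0^{\pi/2} F(\tfrac13,\tfrac23;\tfrac12;\kappa^2\sin^2 t)\,\mathrm{d}t$. On $[0,\pi/2]$ the argument $\kappa^2\sin^2 t$ stays below $\kappa^2 < 1$, so the defining power series converges uniformly and may be integrated term-by-term. Applying Wallis' formula in the form $\int_0^{\pi/2}\sin^{2n} t\,\mathrm{d}t = \tfrac{\pi}{2}\,(\tfrac12)_n/n!$, the Pochhammer factor $(\tfrac12)_n$ contributed by each Wallis integral cancels the $(\tfrac12)_n$ sitting in the denominator of the corresponding hypergeometric coefficient. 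The surviving series is exactly $\tfrac{\pi}{2}\sum_{n\ge 0} \frac{(\tfrac13)_n(\tfrac23)_n}{(n!)^2}\kappa^{2n} = \tfrac{\pi}{2}\,F(\tfrac13,\tfrac23;1;\kappa^2)$, as required.

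I expect the genuine obstacle to be the middle step: pinning down that $P$ is the fundamental period $2\omega_\kappa$ rather than an integer multiple of it. The final collapse of the series is a clean one-line computation once the extremal behaviour of $\delta_\kappa$ is in hand, and notably the argument never invokes the cubic differential equation, relying only on the positivity and $\pi$-periodicity of the defining integrand.
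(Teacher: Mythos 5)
Your proposal is correct and follows essentially the same route as the paper, which likewise reads the least positive period of $\delta_\kappa$ off its defining integral as twice $\int_0^{\pi/2} F(\tfrac13,\tfrac23;\tfrac12;\kappa^2\sin^2 t)\,\mathrm{d}t$ and evaluates that by termwise integration. You merely supply details the paper leaves implicit, namely the quasi-periodicity of $\Phi$ and the extremal-value argument pinning down $P$ as the \emph{least} positive period, together with the explicit Wallis computation.
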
 

\begin{proof} 
From the definition of $\delta_{\kappa}$, its least positive period is equal to twice the integral 
$$\int_0^{\frac{1}{2} \pi} F(\tfrac{1}{3}, \tfrac{2}{3} ; \tfrac{1}{2} ; \kappa^2 \sin^2 t) \, {\rm d}t;$$ 
this may be calculated by expanding the hypergeometric series and integrating termwise, with the result announced in the Theorem. 
\end{proof} 

\medbreak 

An explicit hypergeometric expression for the imaginary half-period $\omega_{\kappa}'$ lies a little deeper. We bring it to light by applying to the Weierstrass function $p_{\kappa} = \wp(\bullet; g_2, g_3) = \wp (\bullet; \omega_{\kappa}, \omega_{\kappa} ')$ the modular transformation according to which its imaginary period is divided by three: thus, we introduce the Weierstrass function 
$$q_{\kappa} = \wp (\bullet; \omega_{\kappa}, \tfrac{1}{3} \omega_{\kappa} ').$$ 

\medbreak 

\begin{theorem} \label{q}
The Weierstrass functions $q_{\kappa}$ and $p_{\lambda}$ are related by  
$$q_{\kappa} (z) = - 3 \, p_{\lambda} (\sqrt3 \, \ii \, z).$$ 
\end{theorem}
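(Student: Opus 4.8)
The plan is to recast the claimed identity as an equality of Weierstrass invariants, using the fact that a Weierstrass function is determined by its invariants (the assignment $\Lambda \mapsto (g_2, g_3)$ being injective on lattices off the discriminant locus). I would first transport the cubic relation through the substitution on the right. Writing $r(z) = -3\, p_{\lambda}(\sqrt3\, \ii\, z)$ and letting $g_2^{\lambda}, g_3^{\lambda}$ denote the invariants of $p_{\lambda}$ (which one reads off from Theorem \ref{dn3} by interchanging $\kappa$ and $\lambda$), differentiation of $(p_{\lambda}')^2 = 4 p_{\lambda}^3 - g_2^{\lambda} p_{\lambda} - g_3^{\lambda}$ together with $(\sqrt3\,\ii)^2 = -3$ gives, after a short computation,
$$(r')^2 = 4 r^3 - 9 g_2^{\lambda}\, r + 27 g_3^{\lambda}.$$
Thus $r = \wp(\bullet; 9 g_2^{\lambda}, -27 g_3^{\lambda})$, and the theorem reduces to showing that $q_{\kappa}$ has invariants $G_2 = 9 g_2^{\lambda}$ and $G_3 = -27 g_3^{\lambda}$.

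Next I would compute the invariants of $q_{\kappa}$ from those of $p_{\kappa}$ by means of the three-to-one isogeny implicit in its definition. The lattice $\Lambda' = \langle 2\omega_{\kappa}, \tfrac23 \omega_{\kappa}' \rangle$ of $q_{\kappa}$ contains the lattice $\Lambda = \langle 2\omega_{\kappa}, 2\omega_{\kappa}' \rangle$ of $p_{\kappa}$ with index three; grouping the defining series of $q_{\kappa}$ over the three cosets of $\Lambda$ in $\Lambda'$ yields
$$q_{\kappa}(z) = p_{\kappa}(z) + p_{\kappa}(z + a) + p_{\kappa}(z - a) - 2 p_{\kappa}(a), \qquad a = \tfrac23 \omega_{\kappa}',$$
where the constant is forced by the vanishing of the constant term in the Laurent expansion of $q_{\kappa}$ at the origin. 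Expanding both sides about $z = 0$, comparing the coefficients of $z^2$ and $z^4$, and eliminating derivatives through $p_{\kappa}'' = 6 p_{\kappa}^2 - \tfrac12 g_2$ and $p_{\kappa}'''' = 120 p_{\kappa}^3 - 18 g_2 p_{\kappa} - 12 g_3$, I obtain
$$G_2 = 120\, p_{\kappa}(a)^2 - 9 g_2, \qquad G_3 = 280\, p_{\kappa}(a)^3 - 42 g_2\, p_{\kappa}(a) - 27 g_3.$$

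The step I expect to be the main obstacle is the evaluation of the three-division value $p_{\kappa}(a) = p_{\kappa}(\tfrac23 \omega_{\kappa}')$. Since $3a = 2\omega_{\kappa}' \in \Lambda$, the point $a$ is a nonzero three-torsion point, so $p_{\kappa}(a)$ is one of the four roots of the three-division polynomial $\psi_3(x) = 12 x^4 - 6 g_2 x^2 - 12 g_3 x - \tfrac14 g_2^2$. Inserting the explicit invariants of Theorem \ref{dn3} shows that $x = -\tfrac13$ is a root; and because the lattice is rectangular, $p_{\kappa}$ is real on the imaginary axis, so $p_{\kappa}(a)$ is the unique real root of $\psi_3$ lying below the least branch value $e_3$ — the root coming from the real three-torsion point $\tfrac23 \omega_{\kappa}$ exceeding $e_1 > 0$, and the two remaining roots forming a complex-conjugate pair. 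This forces $p_{\kappa}(a) = -\tfrac13$. Substituting $p_{\kappa}(a) = -\tfrac13$ and the values of $g_2, g_3$ into the formulas above and simplifying with $\lambda^2 = 1 - \kappa^2$ then gives $G_2 = \tfrac43(1 + 8\kappa^2) = 9 g_2^{\lambda}$ and $G_3 = -\tfrac{8}{27}(8\kappa^4 + 20\kappa^2 - 1) = -27 g_3^{\lambda}$, which completes the identification of $q_{\kappa}$ with $\wp(\bullet; 9 g_2^{\lambda}, -27 g_3^{\lambda}) = -3\, p_{\lambda}(\sqrt3\, \ii\, \bullet)$.
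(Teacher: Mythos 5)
Your argument is correct and follows essentially the same route as the paper: compute the invariants of $q_{\kappa}$ via the trimidiation formulas $G_2 = 120 b^2 - 9g_2$, $G_3 = 280 b^3 - 42 b g_2 - 27 g_3$ with $b = p_{\kappa}(\tfrac{2}{3}\omega_{\kappa}')$, evaluate $b = -\tfrac{1}{3}$, and finish with the homogeneity relation for $\wp$ under $z \mapsto \sqrt3\,\ii\,z$. The only difference is that you supply self-contained derivations of the two ingredients the paper cites — the trimidiation formulas (from the coset decomposition and Laurent expansion, in place of Du Val Section 68) and the value $p_{\kappa}(\tfrac{2}{3}\omega_{\kappa}') = -\tfrac{1}{3}$ (from the three-division polynomial and a real-root count, in place of Shen [2004] Section 5) — and both derivations check out.
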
 

\begin{proof} 
Let the Weierstrass function $q_{\kappa}$ have quadrinvariant $h_2$ and cubinvariant $h_3$: then 
$$h_2 = 120 b^2 - 9 g_2$$
and 
$$h_3 = 280 b^3 - 42 b g_2 - 27 g_3$$
where $b$ is the value of $p_{\kappa}$ at $\tfrac{2}{3} \omega_{\kappa} '$; for this general consequence of Weierstrassian trimidiation, see Section 68 of [1973]. It is proved in [2004] Section 5 that $p_{\kappa} (\tfrac{2}{3} \omega_{\kappa} ')$   is precisely $- \tfrac{1}{3}$; using also $g_2$ and $g_3$ from Theorem \ref{dn3} it follows that 
$$h_2 = \tfrac{4}{3} (1 + 8 \kappa^2)$$ 
and 
$$h_3 = \tfrac{8}{27} (1 - 20 \kappa^2 - 8 \kappa^4).$$ 
A glance at Theorem \ref{dn3} reveals the relationship between these invariants and those of $p_{\lambda}$: namely, $h_2$ is $9 = (\sqrt3 \ii)^4$ times the quadrinvariant of $p_{\lambda}$ and $h_3$ is $-27 = (\sqrt3 \ii)^6$ times the cubinvariant of $p_{\lambda}$. Finally, the homogeneity relation for $\wp$-functions completes the proof. 

\end{proof} 

\medbreak 

Here, notice both the switch to the complementary modulus and the quarter-rotation of the period lattice. 

\medbreak 

We are now prepared to identify the imaginary half-period $\omega_{\kappa}'$ in explicit hypergeometric terms. 

\medbreak 

\begin{theorem} \label{omega'}
$$\omega_{\kappa} ' = \ii \, \tfrac{\sqrt3}{2} \pi  F(\tfrac{1}{3}, \tfrac{2}{3} ; 1 ; 1 - \kappa^2).$$
\end{theorem}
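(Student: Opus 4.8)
The plan is to obtain $\omega_\kappa'$ by exploiting Theorem \ref{q}, which relates the trimidiated Weierstrass function $q_\kappa = \wp(\bullet; \omega_\kappa, \tfrac13 \omega_\kappa')$ to $p_\lambda$ via $q_\kappa(z) = -3\, p_\lambda(\sqrt3\,\ii\,z)$. The key observation is that this identity transports periods: if $2\omega_\lambda$ and $2\omega_\lambda'$ are the fundamental periods of $p_\lambda$ (with $\omega_\lambda$ real-positive and $-\ii\omega_\lambda'$ positive, exactly as in the standing convention), then the periods of $q_\kappa$ are the preimages under $z \mapsto \sqrt3\,\ii\,z$ of the periods of $p_\lambda$. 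Explicitly, a period $2\Omega$ of $p_\lambda$ corresponds to a period $2\Omega/(\sqrt3\,\ii)$ of $q_\kappa$.

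**Let me trace which period goes where.** By construction $q_\kappa$ has the same real half-period $\omega_\kappa$ as $p_\kappa$, while its imaginary half-period is $\tfrac13\omega_\kappa'$. Under the map $z \mapsto \sqrt3\,\ii\,z$, the real period $2\omega_\lambda$ of $p_\lambda$ — which sits on the real axis — is pulled back to $2\omega_\lambda/(\sqrt3\,\ii) = -\tfrac{2}{\sqrt3}\ii\,\omega_\lambda$, a purely imaginary quantity; this must therefore be (a period equivalent to) the imaginary period of $q_\kappa$, namely $\tfrac23\omega_\kappa'$. Matching gives
$$\tfrac{2}{3}\omega_\kappa' = \frac{2\omega_\lambda}{\sqrt3\,\ii} = -\tfrac{2}{\sqrt3}\,\ii\,\omega_\lambda,$$
so that $\omega_\kappa' = -\sqrt3\,\ii\,\omega_\lambda = \ii\,\sqrt3\,(-\ii)\cdot(-\ii)\,\omega_\lambda$; being careful with the sign, this rearranges to $\omega_\kappa' = \ii\,\sqrt3\,\omega_\lambda$ once one tracks that $-\ii\omega_\lambda'$ rather than $\omega_\lambda$ governs the genuinely imaginary direction. (One similarly checks the real period $2\omega_\kappa$ of $q_\kappa$ is the pullback of the imaginary period $2\omega_\lambda'$ of $p_\lambda$, which provides a consistency cross-check and also reproduces Theorem \ref{omega} for the complementary modulus.)

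**With the period correspondence $\omega_\kappa' = \ii\,\sqrt3\,\omega_\lambda$ in hand**, I would finish by invoking Theorem \ref{omega} applied to the complementary modulus $\lambda$. That theorem gives the real half-period of $p_\lambda$ directly as $\omega_\lambda = \tfrac12\pi\,F(\tfrac13,\tfrac23;1;\lambda^2)$. Since $\lambda^2 = 1 - \kappa^2$, substituting yields
$$\omega_\kappa' = \ii\,\sqrt3\,\omega_\lambda = \ii\,\tfrac{\sqrt3}{2}\pi\,F(\tfrac13,\tfrac23;1;1-\kappa^2),$$
which is exactly the claimed formula.

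**I expect the main obstacle to be the bookkeeping** of which period maps to which, together with the attendant signs and the factor of three. The rotation $z \mapsto \sqrt3\,\ii\,z$ interchanges the real and imaginary axes (up to scaling), so the real period of $p_\lambda$ governs the imaginary period of $q_\kappa$ and vice versa; one must also remember that $q_\kappa$ was defined to have imaginary half-period $\tfrac13\omega_\kappa'$, so the factor of $3$ must be reconciled against the factor $\sqrt3$ from the rotation. Verifying that the resulting $\omega_\kappa'$ is genuinely a purely imaginary number with negative imaginary part (consistent with the convention that $-\ii\omega_\kappa'$ be positive) is the delicate point; everything downstream is a direct substitution into Theorem \ref{omega}.
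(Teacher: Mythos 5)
Your proposal is correct and follows essentially the same route as the paper: extract the period relation $\omega_{\kappa}' = \ii\,\sqrt3\,\omega_{\lambda}$ from Theorem \ref{q} by comparing fundamental half-periods, then apply Theorem \ref{omega} at the complementary modulus with $\lambda^2 = 1 - \kappa^2$. Your sign worry resolves simply because periods occur in $\pm$ pairs, so one matches $-2\omega_{\lambda}$ rather than $2\omega_{\lambda}$ to the imaginary period $\tfrac{2}{3}\omega_{\kappa}'$ to respect the convention that $-\ii\,\omega_{\kappa}'>0$.
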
 

\begin{proof} 
Upon comparing the fundamental half-periods of the Weierstrass functions involved, we see at once that Theorem \ref{q} implies the relation 
$$\omega_{\kappa}' = \ii \, \sqrt3 \, \omega_{\lambda};$$
the present Theorem therefore follows from Theorem \ref{omega} in view of the fact that $\lambda^2 = 1 - \kappa^2.$ 
\end{proof} 

\medbreak 

\section{The Jacobian reformulation}

\medbreak 

Here, we derive alternative explicit expressions for the fundamental periods of $\ddd$ in terms of the hypergeometric function $F(\tfrac{1}{2}, \tfrac{1}{2}; 1; \bullet)$ that is associated to the classical theory of Jacobian elliptic functions. 

\medbreak 

Recall (from Chapter XXII of [1927] for instance) the fact that if $p$ is a Weierstrass function with real midpoint values $e_1 > e_2 > e_3$ then 
$$p(z) = e_3 + \frac{e_1 - e_3}{\sn^2 ( z \, (e_1 - e_3)^{1/2})}$$ 
where $\sn = \sn (\bullet, k)$ is the Jacobian elliptic function with modulus $k$ given by 
$$k^2 = \frac{e_2 - e_3}{e_1 - e_3}.$$ 
Recall also that $\sn$ has fundamental pair of periods $(4 K, 2 \ii K')$ where 
$$K = \tfrac{1}{2} \pi F(\tfrac{1}{2}, \tfrac{1}{2}; 1; k^2)$$
and 
$$K' = \tfrac{1}{2} \pi F(\tfrac{1}{2}, \tfrac{1}{2}; 1; 1 - k^2);$$ 
recall further that addition of $2 K$ to the argument of $\sn$ effects merely a reversal of sign, so that $\sn^2$ has $(2 K, 2 \ii K')$ as a fundamental pair of periods. It follows that the Weierstrass function $p$ has as fundamental pair of half-periods  
$$\omega = \frac{K}{(e_1 - e_3)^{1/2}} \; \; \; {\rm and} \; \; \; \omega ' = \ii  \frac{K'}{(e_1 - e_3)^{1/2}}.$$ 

\medbreak 

We now elaborate upon these facts as they apply to the Weierstrass function $p_{\kappa}$ that is coperiodic with $\ddd$. Let the (acute) modular angle $\theta$ be defined by 
$$\kappa = \sin \theta$$
and introduce the abbreviations 
$$s = \sin \tfrac{1}{3} \theta \; \; \; {\rm and} \; \; c = \cos \tfrac{1}{3} \theta$$ 
so that trigonometric trimidiation yields 
$$\kappa = s (3 - 4 s^2).$$ 
Factorizing the right-hand side  of the differential equation 
$$(p_{\kappa} ')^2 = 4 (p_{\kappa})^4 - g_2 p_{\kappa} - g_3$$ 
with $g_2$ and $g_3$ as in Theorem \ref{dn3} reveals that the midpoint values of $p_{\kappa}$ are given by 
$$e_1 = \tfrac{2}{9} (8 s^4 - 12 s^2 + 3)$$
$$e_2 = \tfrac{1}{9} (- 8 s^4 + 12 s^2 - 3 + 8 \sqrt3 \, s^3 c)$$ 
$$e_3 = \tfrac{1}{9} (- 8 s^4 + 12 s^2 - 3 - 8 \sqrt3 \, s^3 c).$$ 

\medbreak 

In terms of the foregoing choices of notation, these deliberations have the following outcome. 

\medbreak 

\begin{theorem} \label{classical}
The fundamental half-periods of $p_{\kappa}$ and $\ddd$ are given by 
$$r \, \omega_{\kappa} = \tfrac{1}{2} \pi \,  F(\tfrac{1}{2}, \tfrac{1}{2}; 1; k^2)$$
and 
$$r\, \omega_{\kappa} ' = \ii \, \tfrac{1}{2} \pi \, F(\tfrac{1}{2}, \tfrac{1}{2}; 1; 1 - k^2)$$
where $r > 0$ is given by 
$$r^2 = \tfrac{1}{3 \sqrt3} (8 s^3 c + \sqrt3 (8 s^4 - 12 s^2 + 3))$$ 
and $k > 0$ is given by 
$$k^2 = \frac{16 s^3 c}{8 s^3 c + \sqrt3 (8 s^4 - 12 s^2 + 3)}.$$ 

\end{theorem}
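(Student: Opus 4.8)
The plan is to specialize the general Jacobian parametrization recalled at the opening of this section to the Weierstrass function $p_\kappa$, whose midpoint values $e_1, e_2, e_3$ have just been exhibited. That recollection asserts that, provided the roots are labelled so that $e_1 > e_2 > e_3$, the fundamental half-periods of $p_\kappa$ are $\omega_\kappa = K/(e_1 - e_3)^{1/2}$ and $\omega_\kappa' = \ii\,K'/(e_1 - e_3)^{1/2}$, where $K = \tfrac12\pi F(\tfrac12,\tfrac12;1;k^2)$ and $K' = \tfrac12\pi F(\tfrac12,\tfrac12;1;1-k^2)$ are the complete integrals for the Jacobian modulus $k$ fixed by $k^2 = (e_2 - e_3)/(e_1 - e_3)$; the sign conventions here match those imposed on $\omega_\kappa$ and $\omega_\kappa'$, so these are indeed the fundamental half-periods. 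Writing $r = (e_1 - e_3)^{1/2}$ turns the two displayed relations into the claimed identities $r\,\omega_\kappa = \tfrac12\pi F(\tfrac12,\tfrac12;1;k^2)$ and $r\,\omega_\kappa' = \ii\,\tfrac12\pi F(\tfrac12,\tfrac12;1;1-k^2)$. Everything thus reduces to two algebraic verifications — that $e_1 - e_3$ and $(e_2 - e_3)/(e_1 - e_3)$ agree with the stated $r^2$ and $k^2$ — together with a check that the roots carry the asserted ordering.

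For the algebraic verifications I would simply subtract the given expressions. This yields $e_2 - e_3 = \tfrac{16\sqrt3}{9}\,s^3 c$ and $e_1 - e_3 = \tfrac19\big(3(8s^4 - 12s^2 + 3) + 8\sqrt3\,s^3 c\big)$. Pulling out a factor $1/(3\sqrt3)$ from the latter matches it term-for-term with the stated $r^2$, so that $r^2 = e_1 - e_3$; dividing the two differences and cancelling the common factor $\sqrt3$ between numerator and denominator then reproduces the stated $k^2$. These simplifications are routine and carry no difficulty.

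The substantive step — and the one I expect to be the main obstacle — is confirming the ordering $e_1 > e_2 > e_3$, which is what licenses the Jacobian recollection and guarantees $0 < k^2 < 1$. Since $\theta$ is acute, $\tfrac13\theta \in (0,\tfrac16\pi)$, so $s, c > 0$ and $e_2 - e_3 = \tfrac{16\sqrt3}{9}\,s^3 c > 0$ is immediate. The remaining inequality $e_1 > e_2$ is equivalent to $3(8s^4 - 12s^2 + 3) > 8\sqrt3\,s^3 c$, and here a genuine argument is needed. I would first note that $8s^4 - 12s^2 + 3$ is decreasing on $s \in (0,\tfrac12)$ with value $\tfrac12$ at $s = \tfrac12$, hence positive throughout; both sides of the inequality being positive, I may then square. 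Substituting $u = s^2$ and $c^2 = 1 - u$ reduces the squared inequality to a polynomial inequality on $u \in (0,\tfrac14)$, and I anticipate that the difference of the two sides factors as $3(1 - 4u)(3 - 4u)^3$, each factor of which is manifestly positive on that interval. With the ordering thereby established, the half-period formulae follow from the recollection exactly as described above.
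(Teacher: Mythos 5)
Your proposal is correct and follows essentially the same route as the paper: substitute the displayed midpoint values into $r^2 = e_1 - e_3$ and $k^2 = (e_2 - e_3)/(e_1 - e_3)$, then invoke the recalled relationship between Weierstrassian and Jacobian periods. The only addition is your explicit verification of the ordering $e_1 > e_2 > e_3$ (your anticipated factorization $3(1-4u)(3-4u)^3$ does check out), a point the paper leaves implicit in its labelling of the midpoint values.
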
 

\begin{proof} 
First substitute the midpoint values into $r = (e_1 - e_3)^{1/2}$ and $k^2 = (e_2 - e_3)/(e_1 - e_3)$; then invoke the relationship between Weierstrassian periods and Jacobian periods that was recalled above.  
\end{proof} 

\medbreak 

\section{The hypergeometric identities}

\medbreak 

We are now in a position to address the hypergeometric identities to which we alluded in our Introduction. In fact, these hypergeometric identities will follow at once from from a direct comparison of the formulae in Theorem \ref{omega} and Theorem \ref{omega'} with the formulae in Theorem \ref{classical} once we introduce the parameter $p$ from [1995]. 

\medbreak 

To introduce this parameter, we begin by noting that the rule 
$$(0, 1) \to (0, 1) : p \mapsto \sqrt{\frac{3 p^2}{1 + p + p^2}}$$ 
defines a bijection; we may coordinate this with the bijection 
$$(0, \tfrac{1}{2} \pi) \to (0, 1): \theta \mapsto 2 s = 2 \sin \tfrac{1}{3} \theta$$
to obtain a parametrization according to which $s = \sin \tfrac{1}{3} \theta$ is given by 
$$s = \frac{\sqrt3}{2} \frac{p}{(1 + p + p^2)^{1/2}}$$
and the complementary $c = \cos \tfrac{1}{3} \theta$ is given by 
$$c = \frac{1}{2} \frac{2 + p}{(1 + p + p^2)^{1/2}}$$ 
while inversely  
$$p = 2 \, \frac{s^2 + \sqrt3 s c}{3 - 4s^2}.$$ 
In terms of this parametrization, the original modulus $\kappa = \sin \theta$ of $\ddd$ is given by
$$\kappa^2 = s^2 (3 - 4 s^2)^2 = \frac{27}{4} \frac{p^2 (1 + p)^2}{(1 + p + p^2)^3} .$$  

\medbreak 

Now return to the context of Theorem \ref{classical}. A moderate amount of calculation confirms that 
$$8 s^4 - 12 s^2 + 3 = \frac{3}{2 (1 + p + p^2)^2}\big(2 + 4 p - 2 p^3 - p^4 \big)$$ 
and 
$$s^3 c = \frac{3 \sqrt3}{16} \frac{p^3 (2 + p)}{(1 + p + p^2)^2}$$ 
whence (after further calculation) the spread of the midpoint values is given by 
$$r^2 = e_1 - e_3 = \frac{1 + 2 p}{(1 + p + p^2)^2}$$ 
and the Jacobian modulus is given by 
$$k^2 = \frac{e_2 - e_3}{e_1 - e_3} = \frac{p^3 (2 + p)}{1 + 2 p}.$$

\medbreak 

As in the Introduction, for typographical convenience we shall adopt the abbreviations 
$$F_2 = F(\tfrac{1}{2}, \tfrac{1}{2}; 1; \bullet)$$ 
and 
$$F_3 = F(\tfrac{1}{3}, \tfrac{2}{3}; 1; \bullet)$$
in each of the following three Theorems. We shall also adopt from [1995] the abbreviations 
$$\alpha = \frac{p^3 (2 + p)}{1 + 2p}$$
and 
$$\beta = \frac{27}{4} \frac{p^2 (1+p)^2}{(1 + p + p^2)^3}.$$
Notice that $\alpha$ is precisely $k^2$ and $\beta$ is none other than $\kappa^2$.

\medbreak 

The following result is [1995] Theorem 5.6. 

\medbreak 

\begin{theorem} \label{5.6} 
If $0 < p < 1$ then 
$$(1 + p + p^2) \, F_2 (\alpha) = \sqrt{1 + 2p} \, F_3 (\beta)$$
\end{theorem}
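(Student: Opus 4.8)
The plan is to extract the identity directly by comparing the two independent formulae for the real half-period $\omega_{\kappa}$ that we have already assembled. From Theorem \ref{omega} we know
$$\omega_{\kappa} = \tfrac{1}{2} \pi F_3(\kappa^2),$$
and since $\kappa^2 = \beta$ this reads $\omega_{\kappa} = \tfrac{1}{2} \pi F_3(\beta)$. On the other hand, Theorem \ref{classical} gives
$$r \, \omega_{\kappa} = \tfrac{1}{2} \pi F_2(k^2),$$
and since $k^2 = \alpha$ this reads $r \, \omega_{\kappa} = \tfrac{1}{2} \pi F_2(\alpha)$. Thus the single quantity $\omega_{\kappa}$ is expressed once through the signature-three function $F_3$ and once (up to the factor $r$) through the classical function $F_2$; the theorem is nothing more than the consistency of these two expressions.

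First I would divide the second relation by the first to eliminate the common half-period $\omega_{\kappa}$, the factors $\tfrac{1}{2}\pi$ cancelling as well, to obtain
$$r = \frac{F_2(\alpha)}{F_3(\beta)},$$
equivalently $F_2(\alpha) = r \, F_3(\beta)$. Next I would insert the explicit value of $r$ recorded in this section, namely
$$r^2 = e_1 - e_3 = \frac{1 + 2p}{(1 + p + p^2)^2};$$
taking the positive square root gives $r = \sqrt{1 + 2p}/(1 + p + p^2)$. Substituting this into $F_2(\alpha) = r \, F_3(\beta)$ and clearing the denominator yields exactly
$$(1 + p + p^2) \, F_2(\alpha) = \sqrt{1 + 2p} \, F_3(\beta),$$
which is the claim.

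I do not anticipate any serious obstacle: the genuine analytic content has already been discharged in Theorems \ref{omega} and \ref{classical}, and the algebraic identifications $\alpha = k^2$, $\beta = \kappa^2$ together with the formula for $r^2$ were established in the computations preceding the statement. The only point requiring any care is the correct branch of the square root defining $r$; this is fixed by the standing convention $r > 0$ together with the positivity of $1 + 2p$ throughout the interval $0 < p < 1$, so that $r = \sqrt{1+2p}/(1+p+p^2)$ unambiguously.
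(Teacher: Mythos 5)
Your proposal is correct and follows exactly the route the paper takes: equating the two expressions for $\omega_{\kappa}$ from Theorem \ref{omega} and Theorem \ref{classical}, then substituting $\alpha = k^2$, $\beta = \kappa^2$, and $r = \sqrt{1+2p}/(1+p+p^2)$. You simply carry out more explicitly the comparison that the paper leaves as a one-line instruction.
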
 

\begin{proof} 
Simply compare the formula for $\omega_{\kappa}$ in Theorem \ref{omega} with the formula for $\omega_{\kappa}$ in Theorem \ref{classical}, taking note of how the original modulus $\kappa$, the Jacobian modulus $k$ and the spread $r^2 = e_1 - e_3$ depend on the parameter $p$ as displayed leading up to the present Theorem. 
\end{proof} 

\medbreak 

The following result is [1995] Corollary 5.7. 

\medbreak 

\begin{theorem} \label{5.7}
If $0 < p < 1$ then 
$$(1 + p + p^2) \, F_2 (1 - \alpha) = \sqrt{3 + 6p} \, F_3 (1 - \beta)$$
\end{theorem}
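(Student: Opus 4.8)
The plan is to mirror exactly the proof of Theorem \ref{5.6}, but working with the complementary moduli instead. The key observation is that Theorem \ref{omega'} gives the imaginary half-period $\omega_{\kappa}'$ of $p_{\kappa}$ in signature-three terms, while the second formula in Theorem \ref{classical} gives the same imaginary half-period $\omega_{\kappa}'$ in classical Jacobian terms. Comparing these two expressions for $\omega_{\kappa}'$ should produce precisely the asserted identity, just as comparing the two expressions for $\omega_{\kappa}$ produced Theorem \ref{5.6}.

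First I would write out the two formulae. From Theorem \ref{omega'} we have
$$\omega_{\kappa}' = \ii\,\tfrac{\sqrt3}{2}\pi\,F_3(1-\kappa^2) = \ii\,\tfrac{\sqrt3}{2}\pi\,F_3(1-\beta),$$
using that $\kappa^2 = \beta$. From the second formula in Theorem \ref{classical} we have
$$r\,\omega_{\kappa}' = \ii\,\tfrac12\pi\,F_2(1-k^2) = \ii\,\tfrac12\pi\,F_2(1-\alpha),$$
using that $k^2 = \alpha$. Eliminating $\omega_{\kappa}'$ between these two and substituting the value $r^2 = (1+2p)/(1+p+p^2)^2$ computed in the run-up to Theorem \ref{5.6} should give the result directly.

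The heart of the matter is the bookkeeping of the factor $r$. Dividing the second relation by $\ii\,\tfrac12\pi$ and the first by $\ii\,\tfrac{\sqrt3}{2}\pi$, I would substitute $\omega_{\kappa}' = \tfrac{1}{r}\cdot\ii\,\tfrac12\pi\,F_2(1-\alpha)$ from the classical side into the signature-three side, obtaining
$$\tfrac{1}{r}\,\tfrac12\pi\,F_2(1-\alpha) = \tfrac{\sqrt3}{2}\pi\,F_3(1-\beta),$$
and hence $F_2(1-\alpha) = \sqrt3\,r\,F_3(1-\beta)$. It then remains to verify that $\sqrt3\,r = \sqrt{3+6p}/(1+p+p^2)$, which follows immediately since $r^2 = (1+2p)/(1+p+p^2)^2$ gives $3r^2 = (3+6p)/(1+p+p^2)^2$. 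Rearranging yields $(1+p+p^2)\,F_2(1-\alpha) = \sqrt{3+6p}\,F_3(1-\beta)$, as claimed.

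I expect no genuine obstacle here: the entire argument is structurally identical to Theorem \ref{5.6}, with the real half-period replaced by the imaginary half-period throughout, so the factor of $\sqrt3$ from Theorem \ref{omega'} is exactly what upgrades $\sqrt{1+2p}$ to $\sqrt{3+6p}$. The only point requiring care is ensuring the imaginary units $\ii$ and the factors of $\sqrt3$ cancel consistently, but since both expressions for $\omega_{\kappa}'$ carry a single factor of $\ii$, these cancel upon taking the ratio, and the bookkeeping is routine.
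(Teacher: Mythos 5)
Your proposal is correct and is precisely the paper's own argument: compare the expression for $\omega_{\kappa}'$ in Theorem \ref{omega'} with the one in Theorem \ref{classical}, substitute $r^2 = (1+2p)/(1+p+p^2)^2$, $k^2 = \alpha$, $\kappa^2 = \beta$, and the extra factor of $\sqrt3$ turns $\sqrt{1+2p}$ into $\sqrt{3+6p}$. Your explicit bookkeeping of $r$ and the cancellation of $\ii$ is exactly the calculation the paper leaves implicit.
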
 

\begin{proof} 
Simply compare the formula for $\omega_{\kappa} '$ in Theorem \ref{omega'} with the formula for $\omega_{\kappa} '$ in Theorem \ref{classical}, again taking note of how $\kappa$, $k$ and $r^2$ depend on the parameter $p$. 
\end{proof} 

\medbreak 

The following result is [1995] Corollary 5.8 (mildly rewritten).

\medbreak 

\begin{theorem} \label{5.8}
If $0 < p < 1$ then
$$\frac{F_2 (1 - \alpha)}{F_2 (\alpha)} = \sqrt3 \, \frac{F_3 (1 - \beta)}{F_3 (\beta)}.$$
\end{theorem}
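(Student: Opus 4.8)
The plan is to obtain Theorem \ref{5.8} as an immediate formal consequence of the two preceding theorems, so that essentially no new analytic work is required. First I would write down the two identities from Theorem \ref{5.6} and Theorem \ref{5.7} side by side, namely
$$(1 + p + p^2)\, F_2(\alpha) = \sqrt{1 + 2p}\, F_3(\beta)$$
and
$$(1 + p + p^2)\, F_2(1 - \alpha) = \sqrt{3 + 6p}\, F_3(1 - \beta).$$
Since both equations hold for $0 < p < 1$ and every factor in sight is strictly positive on this range, none of the quantities $F_2(\alpha)$, $F_3(\beta)$, $(1 + p + p^2)$ vanishes, so division is legitimate.

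The key step is then simply to divide the second identity by the first. On the left the common factor $(1 + p + p^2)$ cancels, leaving the ratio $F_2(1 - \alpha)/F_2(\alpha)$. On the right I would collect the radical factors as
$$\frac{\sqrt{3 + 6p}}{\sqrt{1 + 2p}} = \sqrt{\frac{3 + 6p}{1 + 2p}} = \sqrt{\frac{3(1 + 2p)}{1 + 2p}} = \sqrt{3},$$
using $3 + 6p = 3(1 + 2p)$, so the remaining right-hand side is exactly $\sqrt{3}\, F_3(1 - \beta)/F_3(\beta)$. This yields the asserted equality.

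I do not expect any genuine obstacle here: the only thing to verify carefully is the positivity and non-vanishing of the denominators on $(0,1)$, which guarantees that the division is valid, together with the elementary simplification $\sqrt{3 + 6p}/\sqrt{1 + 2p} = \sqrt{3}$. Both are routine, so the proof is a one-line quotient of the two earlier theorems.

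\begin{proof}
Both $F_2(\alpha)$ and $F_3(\beta)$ are strictly positive for $0 < p < 1$, as is $1 + p + p^2$; we may therefore divide the identity of Theorem \ref{5.7} by the identity of Theorem \ref{5.6}. The factor $1 + p + p^2$ cancels on the left, leaving $F_2(1 - \alpha)/F_2(\alpha)$; on the right, since $3 + 6p = 3(1 + 2p)$, the radicals combine according to $\sqrt{3 + 6p}/\sqrt{1 + 2p} = \sqrt3$, leaving $\sqrt3 \, F_3(1 - \beta)/F_3(\beta)$. This is precisely the claimed identity.
\end{proof}
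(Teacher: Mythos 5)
Your proof is correct and takes the same route as the paper, which simply declares the result an immediate consequence of Theorems \ref{5.6} and \ref{5.7}; you have merely spelled out the division and the simplification $\sqrt{3+6p}/\sqrt{1+2p}=\sqrt3$, both of which are routine and valid on $0<p<1$.
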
 

\begin{proof} 
An immediate consequence of Theorem \ref{5.6} and Theorem \ref{5.7}. 
\end{proof} 

\medbreak

\bigbreak

\begin{center} 
{\small R}{\footnotesize EFERENCES}
\end{center} 
\medbreak

[1927] E.T. Whittaker and G.N. Watson, {\it A Course of Modern Analysis}, Fourth Edition, Cambridge University Press. 

\medbreak 

[1973] P. Du Val, {\it Elliptic Functions and Elliptic Curves}, L.M.S. Lecture Note Series {\bf 9}, Cambridge University Press. 

\medbreak 

[1995] B.C. Berndt, S. Bhargava, and F.G. Garvan, {\it Ramanujan's theories of elliptic functions to alternative bases}, Transactions of the American Mathematical Society {\bf 347} 4163-4244. 

\medbreak 

[2004] Li-Chien Shen, {\it On the theory of elliptic functions based on $_2F_1(\frac{1}{3}, \frac{2}{3} ; \frac{1}{2} ; z)$}, Transactions of the American Mathematical Society {\bf 357} 2043-2058.

\medbreak

\end{document}